\numberwithin{equation}{section}
\renewcommand{\epsilon}{\varepsilon}
\renewcommand{\Im}{{\ensuremath{\mathrm{Im\,}}}}
\renewcommand{\Re}{{\ensuremath{\mathrm{Re\,}}}}
\DeclareSymbolFont{SY}{U}{psy}{m}{n}
\DeclareMathSymbol{\emptyset}{\mathord}{SY}{'306}
\DeclareMathOperator{\Ran}{Ran} 
 \DeclareMathOperator{\Dom}{Dom}
\DeclareMathOperator{\sign}{sign}
\DeclareMathOperator{\spec}{spec}
\DeclareMathSymbol{\newtimes}{\mathbin}{SY}{'264}
\newcommand{\dist}{\mathrm{dist}}
\newcommand{\R}{\mathbb{R}}
\newcommand{\EE}{\mathsf{E}}
\newcommand{\fH}{\mathfrak{H}}
\newcommand{\fa}{\mathfrak{a}}
\newcommand{\fb}{\mathfrak{b}}
\newcommand{\ft}{\mathfrak{t}}
\newcommand{\fv}{\mathfrak{v}}
\newcommand{\cH}{{\mathcal H}}
\newcommand{\cV}{{\mathcal V}}
\newcommand{\ii}{\mathrm{i}}
\newtheorem{theorem}{Theorem}[section]{\bf}{\it}
\newtheorem{proposition}[theorem]{Proposition}{\bf}{\it}
\newtheorem{corollary}[theorem]{Corollary}{\bf}{\it}
\newtheorem{example}[theorem]{Example}{\it}{\rm}
{\bf}{\it}
\newtheorem{remark}[theorem]{Remark}{\it}{\rm}
\newtheorem{definition}[theorem]{Definition}{\bf}{\it}
{\bf}{\it}
{\bf}{\it}
{\bf}{\it}
\newtheorem{hypothesis}[theorem]{Hypothesis}{\bf}{\it}
\title[The TAN $2\Theta$ Theorem]{The TAN $2\Theta$ Theorem for Indefinite Quadratic Forms}
\author[L. Grubi\v{s}i\'c]{Luka Grubi\v{s}i\'c}
\address{L.~Grubi\v{s}i\'c,
Department of Mathematics, University of Zagreb, Bijeni\v{c}ka 30,
10000 Zagreb, Croatia}
\email{luka.grubisic@math.hr}
\author[V. Kostrykin]{Vadim Kostrykin}
\address{V.~Kostrykin, FB 08 - Institut f\"{u}r Mathematik,
Johannes Gutenberg-Universit\"{a}t Mainz,
Staudinger Weg 9,
D-55099 Mainz,
Germany}
\email{kostrykin@mathematik.uni-mainz.de}
\author[K. A. Makarov]{Konstantin A.~Makarov}
\address{K.~A.~Makarov, Department of Mathematics, University of
Missouri, Co\-lum\-bia, MO 65211, USA}
\email{makarovk@missouri.edu}
\author[K. Veseli\'c]{Kre\v{s}imir Veseli\'c}
\address{K.~Veseli\'c,
Fakult\"{a}t f\"{u}r Mathematik und Informatik, Fernuniversit\"{a}t Hagen, Postfach 940,
D-58084 Hagen, Germany} \email{kresimir.veselic@fernuni-hagen.de}
\subjclass[2000]{Primary 47A55, 47A07; Secondary 34L05}
\keywords{Perturbation theory, quadratic forms,
invariant subspaces}
\begin{document}

\begin{abstract}
A version of the Davis-Kahan Tan $2\Theta$ theorem [SIAM J. Numer.
Anal. \textbf{7} (1970), 1 -- 46] for not necessarily semibounded
linear operators defined by quadratic forms is proven. This theorem generalizes a
recent result by Motovilov and Selin [Integr. Equat. Oper. Theory \textbf{56} (2006), 511 -- 542].
\end{abstract}

\maketitle

\footnotetext{This work is supported in part by the Deutsche Forschungsgemeinschaft}

\section{Introduction}

In the 1970 paper \cite{Davis:Kahan} Davis and Kahan studied the rotation of spectral subspaces for
$2\times 2$ operator matrices under off-diagonal perturbations. In particular, they proved the following result, the celebrated ``Tan $2\Theta$ theorem'': Let  $A_\pm$  be strictly positive bounded operators in Hilbert spaces $\fH_\pm$, respectively, and 
$W$  a bounded operator from $\fH_-$ to $\fH_+$.  Denote by
\begin{equation*}
A=\begin{pmatrix} A_+& 0 \\ 0 & -A_- \end{pmatrix}\quad \text{and}
\quad B=  A+V=\begin{pmatrix} A_+ & W \\ W^\ast & -A_-\end{pmatrix}
\end{equation*}
the block operator matrices with respect to the orthogonal decomposition of the Hilbert space $\fH=\fH_+\oplus \fH_-$. Then
\begin{equation}\label{tan2}
\|\tan 2\Theta\| \leq \frac{2\|V\|}{d},\qquad
\spec(\Theta)\subset [0,\pi/4),
\end{equation}
where $\Theta$ is the operator angle between the subspaces
$\Ran \EE_{A}(\R_+)$ and $\Ran \EE_{B}(\R_+)$  and 
\begin{equation*}
d=\dist (\spec (A_+) , \spec(-A_-))
\end{equation*}(see, e.g., \cite{Kostrykin:Makarov:Motovilov:2}).   

Estimate \eqref{tan2} can equivalently  be expressed as the following inequality for the norm of the difference of the orthogonal projections $P=\EE_{A}(\R_+)$ and $Q=\EE_{B}(\R_+)$:
\begin{equation}\label{tan:2:Theta}
  \|P-Q\|\leq \sin \bigg (
\frac{1}{2}\arctan \frac{2\|V\|}{d} \bigg),
\end{equation}
which, in particular, implies the estimate
\begin{equation}\label{a1}
  \|P-Q\|<\frac{\sqrt{2}}{2}.
\end{equation}

Independently of the work of Davis and Kahan, inequality \eqref{a1}
has been proven by Adamyan and Langer in \cite{Adamyan:Langer:95}, where
the operators $A_\pm$  were allowed to be semibounded. 
The case $d=0$ has been considered in the work \cite{KMM:2}
by Kostrykin, Makarov, and Motovilov. 
In particular, it was proven that there is a unique orthogonal projection $Q$ from the operator interval $[\EE_B\left ((0, \infty)\right), \EE_B\left ( [0, \infty )\right)]$ such that 
$$
  \|P-Q\|\le\frac{\sqrt{2}}{2},
$$ where $P\in\left  [\EE_A\left ((0, \infty)\right), \EE_A\left ( [0, \infty )\right)\right ]$ is the orthogonal projection onto the invariant (not necessary spectral) subspace $\cH_+\subset \cH$ of the operator $A$.
A particular case of this result has been obtained earlier by Adamyan, Langer, and Tretter,
in \cite{Adamyan:Langer:Tretter:2000a}. Recently, a version of the Tan $2\Theta$ Theorem for off-diagonal perturbations $V$ that are relatively boun\-ded with respect to the diagonal operator $A$ has been proven by Motovilov and Selin in \cite{MS:1}.

In the present work we obtain several generalizations of the aforementioned  results  assuming that the perturbation is given by an off-diagonal symmetric form.

Given a sesquilinear symmetric form $\fa$ and a self-adjoint involution $J$
such that the form $\fa_J[x,y]:=\fa[x,Jy]$ is a  positive definite 
and  $$\fa[x,Jy]=\fa[Jx,y],$$
we call a symmetric sesquilinear form $\fv$ off-diagonal with respect to the orthogonal decomposition $\fH=\fH_+ \oplus \fH_-$ with  $\fH_\pm =\Ran (I\pm J)$  if 
$$\fv[Jx,y]=-\fv[x,Jy].$$

Based  on  a close relationship between the symmetric form $\fa[x,y]+\fv[x,y]$ and the sectorial sesquilinear form $\fa[x,Jy]+\ii \fv[x,Jy]$ (cf.\ \cite{MS:1}, \cite{Veselic}), under the assumption that the off-diagonal form $\fv$ is relatively bounded with respect to the  form $\fa_J$,   
we prove 
\begin{itemize} 
\item[(i)] an analog of the First Representation Theorem for block operator matrices defined as not necessarily semibounded quadratic forms,
\item[(ii)]  a relative version of the Tan $2\Theta$ Theorem. 
\end{itemize}
 
 We also provide several versions of the relative Tan $2\Theta$ Theorem in the case where the form $\fa$ is semibounded.


\subsection*{Acknowledgments} \quad The authors thank S.~Schmitz for
useful discussions and comments. K.A.M. is indebted to the
Institute for Mathematics for its kind hospitality during his two
months stay at the Johannes Gutenberg-Universit\"{a}t Mainz in the
Summer of 2009. The work of K.A.M.\ has been supported in part by
the Deutsche Forschungsgemeinschaft and by the
Inneruniversit\"{a}ren Forschungsf\"{o}rderung of the Johannes
Gutenberg-Universit\"{a}t Mainz. L.G.\ has been supported by the
exchange program between the University of Zagreb and the Johannes
Gutenberg-Universit\"{a}t Mainz and in part by the
grant number 037-0372783-2750 of the MZO\v{S}, Croatia. K.V.~has
been supported  in part by the National Foundation of Science, Higher
Education and Technical Development of the Republic of Croatia
2007-2009.

\section{The First Representation Theorem for off-diagonal form perturbations}\label{sec:off}

To introduce the notation, it is convenient to assume the following hypothesis.

\begin{hypothesis}\label{hh1}
Let $\fa$ be a symmetric sesquilinear form on $\Dom[\fa]$  in a Hilbert space  $\fH$. Assume that $J$ is a self-adjoint involution such that 
$$
J\Dom[\fa]=\Dom[\fa].
$$
Suppose that 
$$
\fa[Jx,y]=\fa[x,Jy] \quad\text{for all } \quad  x,y\in \Dom [\fa_J]=\Dom[\fa],
$$
 and that 
 the form $\fa_J$ given by 
$$
\fa_J[x,y]=\fa[x,Jy], \quad x,y\in \Dom [\fa_J]=\Dom[\fa].
$$
is a positive definite closed form.
 Denote by $m_\pm$  the greatest lower bound of the form $\fa_J$ restricted to the subspace
 $$
 \fH_\pm =\Ran (I\pm J).
 $$ 
\end{hypothesis}

\begin{definition}
Under Hypothesis \ref{hh1}, a symmetric sesquilinear form $\fv$
on $\Dom[\fv]\supset\Dom[\fa]$ is said to be off-diagonal with respect to the orthogonal decomposition
\begin{equation*}
\fH=\fH_+\oplus\fH_-
\end{equation*}  
if
\begin{equation*}
\fv[Jx,y]=-\fv[x,Jy], \quad x,y\in \Dom[\fa].
\end{equation*}
If, in addition, 
\begin{equation}\label{xyz}
v_0:=\sup_{0\ne x\in \Dom[\fa]}\frac{|\fv[x]|}{\fa_J[x]}<\infty,
\end{equation}
the form $\fv$ is said to be an $\fa$-bounded off-diagonal form.
\end{definition}

\begin{remark}\label{rem:4} 
If $\fv$ is an off-diagonal symmetric form and 
$x=x_+ +x_-$ is a unique decomposition of an element $x\in \Dom[\fa]$ such that
$x_\pm\in \fH_\pm\cap \Dom[\fa]$, then 
\begin{equation}\label{tog}
\fv[x]=2\Re \fv[x_+,x_-],\quad x\in \Dom[\fa].
\end{equation}

Moreover, if  $v_0<\infty$, 
then
\begin{equation}\label{vad}
|\fv[x]|\leq 2v_0\sqrt{\fa_J[x_+] \fa_J[x_-]}.
\end{equation} 
\begin{proof} To prove \eqref{tog}, we use the representation
\begin{equation*} 
\fv[x]=\fv[x_+ +x_-, x_+ +x_-]=\fv[x_+]+\fv[x_-]+\fv[x_+, x_-]+\fv[x_-, x_+], \quad x\in \Dom[\fa].
\end{equation*}
Since $\fv$ is an off-diagonal form,  one obtains that
\begin{equation*}
\fv[x_+]=\fv[x_+, x_+]=\fv[Jx_+, Jx_+]=-\fv[x_+, x_+]=-\fv[x_+]=0,
\end{equation*}
and similarly $\fv[x_-]=0$. Therefore,
\begin{equation*}
\fv[x]=\fv[x_+,x_-]+\fv[x_-, x_+]=2\Re \fv[x_+,x_-],\quad x\in \Dom[\fa].
\end{equation*}

To prove \eqref{vad}, first one observes that 
$$
\fa_J[x]=\fa_J[x_+]+\fa_J[x_-]
$$
and, hence, combining \eqref{tog} and \eqref{xyz},
one gets the estimate
\begin{equation*}
|2\Re \fv[x_+,x_-]|\le v_0 \fa_J[x] = v_0 (\fa_J[x_+]+\fa_J[x_-]) \quad \text{for all}\quad x_\pm\in \fH_\pm\cap \Dom[\fa].
\end{equation*}
Hence, for any $t\ge0$ (and, therefore, for all $t\in \R$) one gets that
\begin{equation*}
 v_0 \fa_J[x_+]\,t^2-2|\Re \fv[x_+,x_-]|\,t+ v_0 \fa_J[x_-]\ge 0,
\end{equation*}
which together with \eqref{tog} implies the inequality \eqref{vad}.
\end{proof}
\end{remark}



In this setting we present an analog of the First Representation Theorem in  the off-diagonal perturbation theory.

\begin{theorem}\label{repr}
Assume Hypothesis \ref{hh1}.  Suppose that $\fv$ is  an $\fa$-bounded  off-diagonal with respect to the orthogonal decomposition $\fH=\fH_+\oplus \fH_-$  symmetric form.
On $\Dom[\fb]=\Dom[\fa]$ introduce the symmetric form
\begin{equation*}
 \fb[x,y]=\fa[x,y]+\fv[x,y], \quad x,y\in \Dom[\fb].
\end{equation*}
 Then
 \begin{itemize}
 \item[(i)] there is a unique self-adjoint operator $B$ in $\fH$ such that $\Dom(B)\subset\Dom[\fb]$ and
\begin{equation*}
\fb[x,y]= \langle x, By\rangle\quad\text{for all}\quad x\in\Dom[\fb],\quad y\in \Dom (  B).
\end{equation*}
\item[(ii)] the operator $B$ is boundedly invertible and the open interval
$(-m_-, m_+)\ni 0$ belongs to its resolvent set.
\end{itemize}
\end{theorem}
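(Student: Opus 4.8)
The strategy is to transfer the problem to the sectorial-form setting, where the classical First and Second Representation Theorems (Kato, Ch.\ VI) apply, and then translate the conclusions back. Consider on $\Dom[\fa]$ the sesquilinear form
\[
\ft[x,y]:=\fa_J[x,y]+\ii\,\fv_J[x,y], \qquad \fv_J[x,y]:=\fv[x,Jy].
\]
Here $\fa_J$ is positive definite and closed by Hypothesis \ref{hh1}, and $\fv_J$ is symmetric (using the off-diagonal identity $\fv[Jx,y]=-\fv[x,Jy]$ together with symmetry of $\fv$ one checks $\fv_J[x,y]=\overline{\fv_J[y,x]}$) and $\fa_J$-bounded with relative bound $v_0$ by \eqref{xyz}. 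Hence $\ft$ is a sectorial, closed form, and by Kato's First Representation Theorem there is a unique m-sectorial operator $T$ with $\Dom(T)\subset\Dom[\ft]$ and $\ft[x,y]=\langle x,Ty\rangle$ for $x\in\Dom[\ft]$, $y\in\Dom(T)$. One then defines $B:=JT$ (equivalently $T=JB$) on $\Dom(B):=\Dom(T)$. Because $J$ is a bounded self-adjoint involution commuting with the form decomposition, $\Dom(B)\subset\Dom[\fa]=\Dom[\fb]$, and for $x\in\Dom[\fb]$, $y\in\Dom(B)$,
\[
\langle x,By\rangle=\langle x,JTy\rangle=\langle Jx,Ty\rangle=\ft[Jx,y]=\fa_J[Jx,y]+\ii\fv_J[Jx,y]=\fa[Jx,Jy]+\ii\fv[Jx,Jy].
\]
Using $J\Dom[\fa]=\Dom[\fa]$, $\fa[Jx,Jy]=\fa[x,y]$ and, from the off-diagonal property applied twice, $\ii\fv[Jx,Jy]=\fv[x,y]$; thus $\langle x,By\rangle=\fb[x,y]$, giving the representation in (i).

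For self-adjointness of $B$ in (i): $T$ is m-sectorial, hence $T^\ast$ is m-sectorial with the adjoint form $\ft^\ast[x,y]=\fa_J[x,y]-\ii\fv_J[x,y]=\overline{\ft[y,x]}$, so $T^\ast$ is represented by $\ft^\ast$. A short computation shows that $J T^\ast J$ is also represented by $\ft$, whence $JT^\ast J=T$ by uniqueness, i.e.\ $(JT)^\ast = T^\ast J = J(JT^\ast J)=JT$, so $B=JT$ is symmetric; since $B$ is additionally defined on the natural form domain and $\pm\ii$ can be shown to lie in its resolvent set (from m-sectoriality of $T$), $B$ is self-adjoint. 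Uniqueness of $B$ follows from the uniqueness of $T$ in Kato's theorem together with the bijection $B\mapsto JB$.

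For (ii), the key point is a lower bound making $\ft$ invertible away from $0$ in a controlled way. For $x=x_++x_-\in\Dom[\fa]$ write $r_\pm:=\fa_J[x_\pm]\ge0$, so $\fa_J[x]=r_++r_-$ and, by \eqref{vad}, $|\fv_J[x]| = |\fv[x,Jx]|$; one computes $\fv[x,Jx]=\fv[x_+,x_+]-\fv[x_-,x_-]+\fv[x_-,x_+]-\fv[x_+,x_-]$, and using the off-diagonal vanishing of the diagonal terms this reduces to $-2\ii\,\Im\fv[x_+,x_-]$, hence $|\fv_J[x]|\le 2v_0\sqrt{r_+r_-}$. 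Now for real $\mu$ in the gap, estimate $|\ft[x]-\mu\|x\|^2|$ from below; since $Jx=x_+-x_-$ one has $\fa[x,Jx]$ splitting as $\fa_J[x_+]-\fa_J[x_-]$-type expression, and a careful separation of the $x_+$ and $x_-$ contributions — using $r_+\ge m_+\|x_+\|^2$, $r_-\ge m_-\|x_-\|^2$ — yields that $\fb[x]-\mu\|x\|^2$ stays bounded away from zero for $\mu\in(-m_-,m_+)$, in fact the quadratic form in $(\|x_+\|,\|x_-\|)$ is definite there. This gives $\|(B-\mu)x\|\ge c\|x\|$ and, with the analogous bound for $B^\ast=B$, that $(-m_-,m_+)\subset\rho(B)$; boundedness of the inverse follows since $0$ is an interior point of this interval (note $m_\pm>0$ as $\fa_J$ is positive definite).

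I expect the main obstacle to be step (ii): one must extract a quantitative gap estimate for the \emph{indefinite} form $\fb$ purely from the relative bound $v_0$ and the numbers $m_\pm$, keeping track of the cross term $2v_0\sqrt{r_+r_-}$ so that it does not swamp the positive contributions; getting the right bookkeeping of $\|x_\pm\|$ versus $\fa_J[x_\pm]$ is where the care is needed. The identification $B=JT$ and its self-adjointness are comparatively routine once the form identities above are pinned down, though one must be attentive to domains since $\fv$ may only be form-bounded, not operator-bounded, relative to $\fa_J$.
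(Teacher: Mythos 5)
Your overall strategy (pass to a sectorial form, apply Kato's representation theorem, set $B=JT$, use symmetry plus invertibility for self-adjointness) is the paper's strategy, but as written there are two concrete errors. First, $\fv_J[x,y]:=\fv[x,Jy]$ is \emph{not} symmetric: symmetry of $\fv$ and off-diagonality give $\overline{\fv_J[y,x]}=\fv[Jx,y]=-\fv[x,Jy]=-\fv_J[x,y]$, so $\fv_J$ is skew-symmetric and it is $\ii\fv[x,Jy]$ that is symmetric. Consequently your $\ft=\fa_J+\ii\fv_J$ is itself a \emph{symmetric} form, which for $v_0\ge 1$ need not be semibounded, hence not sectorial, and Kato's first representation theorem does not apply as invoked (the theorem places no restriction $v_0<1$). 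Even formally the verification breaks: applying off-diagonality twice gives $\fv[Jx,Jy]=-\fv[x,y]$, so $\ii\fv[Jx,Jy]=-\ii\fv[x,y]\ne\fv[x,y]$, and your $JT$ would not represent $\fb$. The paper's choice is $\ft_\mu=\fa_\mu+\ii\fv'$ with the dual form $\fv'[x,y]=\ii\fv[x,Jy]$; that extra factor $\ii$ is exactly what makes the perturbation symmetric, the form sectorial with semi-angle $\arctan v_\mu$, and the identity $\langle x,JT_\mu y\rangle=\fa[x,y]+\fv[x,y]-\mu\langle x,y\rangle$ come out correctly. (Also, that $\fv'$ has the same relative bound as $\fv$ is not automatic from \eqref{xyz}; it uses $\fv[x]=2\Re\fv[x_+,x_-]$, $\fv'[x]=2\Im\fv[x_+,x_-]$ and equality of the corresponding suprema.)

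Second, your route to (ii) would fail. You propose to show that $\fb[x]-\mu\|x\|^2$ ``stays bounded away from zero'' and to deduce $\|(B-\mu)x\|\ge c\|x\|$; but the form is indefinite, so no such lower bound on the quadratic form holds: already for $\fv=0$ and $\mu=0$ there are unit vectors $x$ with $\fa[x]=\fa_J[x_+]-\fa_J[x_-]=0$ although $0\in\rho(A)$, and for indefinite self-adjoint operators vanishing of $\langle x,(B-\mu)x\rangle$ says nothing about $\|(B-\mu)x\|$. The idea you are missing is the paper's: run the sectorial construction for \emph{every} $\mu\in(-m_-,m_+)$, with $\fa_\mu[x,y]=\fa[x,Jy]-\mu\langle x,Jy\rangle$ positive definite on the gap, obtain the m-sectorial $T_\mu$, note that $T_\mu$ (hence $B_\mu=JT_\mu$) is boundedly invertible simply because $\Re\ft_\mu=\fa_\mu$ has a positive lower bound, check that the symmetric, surjective (hence self-adjoint) operator $B_\mu$ represents $\fb-\mu\langle\cdot,\cdot\rangle$, and then invoke uniqueness to conclude $B_\mu=B-\mu I$; this yields $(-m_-,m_+)\subset\rho(B)$ with no coercivity estimate for the indefinite form at all. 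Relatedly, your self-adjointness argument via ``$\pm\ii\in\rho(B)$ from m-sectoriality of $T$'' is unsubstantiated; the clean route is symmetry of $B=JT$ together with surjectivity, i.e.\ bounded invertibility of $T$.
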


\begin{proof}
(i). Given $\mu \in (-m_-, m_+)$, on 
$\Dom[\fa_\mu]=\Dom[\fa]$   introduce  the positive closed  form $\fa_\mu$ by
\begin{equation*}
\fa_\mu[x,y]=\fa[x,Jy]-\mu \langle x, Jy\rangle, \quad x,y\in \Dom[\fa_\mu],
\end{equation*}
and denote by $\fH_{\fa_\mu}$ the Hilbert space $\Dom[\fa_\mu]$ equipped with
the inner product $\langle \cdot, \cdot \rangle_\mu=\fa_\mu[\cdot, \cdot]$.
We remark that the norms $\|\cdot\|_\mu=\sqrt{\fa_\mu[\cdot]}$
on $\fH_{\fa_\mu}=\Dom[\fa_\mu]$ are obviously equivalent.  Since $\fv$ is $\fa$-bounded, 
 one concludes  then that 
\begin{equation*}
v_\mu:=\sup_{0\ne x\in \Dom[\fa]}\frac{|\fv[x]|}{\fa_\mu[x]}<\infty,\quad \text{ for all } \mu\in(-m_-,m_+).
\end{equation*}

Along with the off-diagonal form $\fv$, introduce a dual form $\fv'$ by
\begin{equation*}
\fv'[x,y]=\ii\fv[x,Jy], \quad x,y\in \Dom[\fa].
\end{equation*}

We  claim  that 
$\fv'$ is an $\fa$-bounded off-diagonal symmetric form.
It suffices to show that 
\begin{equation*}
v_\mu=v_\mu'<\infty, \quad \mu \in (-m_-, m_+),
\end{equation*}
where
\begin{equation}\label{vaumu:bis}
 v_\mu':=\sup_{0\ne x\in \Dom[\fa]}\frac{|\fv'[x]|}{\fa_\mu[x]}.
\end{equation}

Indeed,  let $x=x_+ +x_-$ be a unique decomposition of an element $x\in \Dom[\fa]$ such that
$x_\pm\in \fH_\pm\cap \Dom[\fa]$.  By Remark \ref{rem:4},
\begin{equation*}
\fv[x]=\fv[x_+,x_-]+\fv[x_-, x_+]=2\Re \fv[x_+,x_-],\quad x\in \Dom[\fa].
\end{equation*}

In a similar way (since the form $\fv'$ is obviously off-diagonal) one gets  that
\begin{align*}
\fv'[x]&=\ii\fv[x_++x_-, J(x_++x_-)]=\ii\fv'[x_+]-\ii\fv'[x_-]-\ii\fv[x_+, x_-]+\ii\fv[x_-, x_+]
\\&=-\ii\fv[x_+,x_-]+\ii\overline{\fv[x_+, x_-]}=2\Im \fv[x_+,x_-],
 \quad x\in \Dom[\fa].
\end{align*}

Clearly, from \eqref{vaumu:bis} it follows that 
\begin{equation*}
v_\mu'= 2\sup_{0\ne x\in \Dom[\fa]}\frac{ |\Im  \fv[x_+,x_-]|}{\fa_\mu[x]}
=2\sup_{0\ne x\in \Dom[\fa]}\frac{ |\Re  \fv[x_+,x_-]|}{\fa_\mu[x]}=v_\mu,
\end{equation*}
$$\mu \in (-m_-,m_+),$$
which completes the  proof of the claim.

Next, on $\Dom[\ft_\mu]=\Dom[\fa]$ 
introduce  the sesquilinear form 
$$\ft_\mu := \fa_\mu+\ii\fv', \quad \mu \in (-m_-,m_+).$$
Since the form $\fa_\mu$ is positive definite and the form $\fv'$ is an $\fa_\mu$-bounded symmetric form, the form $\ft$
is a closed sectorial form with the vertex $0$ and semi-angle
\begin{equation}\label{tmu}
\theta_\mu=\arctan (v_\mu')=\arctan (v_\mu).
\end{equation}

Let $T_\mu$ be a unique $m$-sectorial operator associated with the form $\ft_\mu$. Introduce the operator   $$B_\mu=JT_\mu\quad \text{on}\quad  \Dom(B_\mu)=\Dom(T_\mu), \quad \mu\in(-m_-, m_+).$$
One obtains that
\begin{equation}\label{bbb}
 \begin{split}
\langle x, B_\mu y\rangle &=\langle x, JT_\mu\rangle=\langle Jx, T_\mu y\rangle
=\fa_\mu[Jx, y]+\ii\fv'[Jx,y] \\
&=\fa[x,y]-\mu\langle Jx,Jy\rangle+\ii^2\fv[Jx,Jy]
\\ &=\fa[x,y]-\mu\langle x,y\rangle+\fv[x,y],
\end{split}
\end{equation}
for all $x\in \Dom[\fa]$, $y\in \Dom(B_\mu)=\Dom(T_\mu)$.
In particular, $B_\mu$ is a symmetric operator on $\Dom(B_\mu)$, since the forms $\fa$ and $\fv$ are symmetric, and $\Dom(B_\mu)=\Dom(T_\mu)\subset \Dom[a]$.

For the real part of the form $\ft_\mu$  is  positive definite with a positive lower bound, the operator $T_\mu$ has a bounded inverse. This implies that the operator $B_\mu=JT_\mu$ has a bounded inverse and, therefore, the symmetric operator $B_\mu$ is self-adjoint on $\Dom(B_\mu)$.

As an immediate consequence, one concludes (put $\mu=0$) that  the self-adjoint operator $B:=B_0$ is associated with the symmetric form $\fb$ and that $\Dom(B)\subset \Dom[\fa]$.

To prove uniqueness, assume that $B'$ is a self-adjoint operator associated with the form $\fb$.
Then for all $x\in \Dom(B)$ and all  $y\in \Dom(B')$ one gets that
\begin{equation*}
\langle x, B'y\rangle=\fb[x,y]=\overline{\fb[y,x]}=\overline{\langle y, Bx\rangle}=\langle Bx, y\rangle,
\end{equation*}
which means that $B=(B')^*=B'$.

(ii). From \eqref{bbb} one concludes that  the self-adjoint operator
$B_\mu + \mu I$ is associated with the form $\fb$ and, hence, by the uniqueness
\begin{equation*}
B_\mu=B-\mu I\quad \text{ on }\quad\Dom(B_\mu)=\Dom(B).
\end{equation*}

Since $B_\mu$ has  a bounded inverse for all $\mu\in (m_-, m_+)$, so does $B-\mu  I$ which means that the interval $(-m_-, m_+)$ belongs to the resolvent set of the operator $B_0$.
\end{proof}

\begin{remark}\label{reg}  
In the particular case $\fv=0$, from Theorem \ref{repr} it follows that 
 there exists a unique self-adjoint operator $A$ associated with the form $\fa$. 

For a different, more constructive proof of Theorem \ref{repr}  as well as for the history of the subject we refer to our work \cite{GKMV}.  
\end{remark}

\begin{remark} 
For the part (i) of Theorem \ref{repr} to hold it is not necessary to require that the form $\fa_J$ in Hypothesis \ref{hh1} is positive definite. It is sufficient to assume that $\fa_J$ is a semi-bounded from below closed form (see, e.g., \cite{Nenciu}).
\end{remark}

\section{The Tan $2 \Theta$ Theorem}\label{sec:tan}

The main result of this work provides  a sharp upper bound for the angle between the positive spectral subspaces $\Ran \EE_A(\R_+) $ and $\Ran \EE_B(\R_+)$ of the operators $A$ and  $B$ respectively.

\begin{theorem}\label{thm:esti}
Assume Hypothesis \ref{hh1} and suppose that $\fv$ is off-diagonal with respect to the decomposition $\fH=\fH_+\oplus \fH_-$.  Let $A$ be a unique self-adjoint operator associated with the form $\fa$ and $B$   the self-adjoint operator associated with the form $\fb=\fa+\fv$ referred to in
Theorem \ref{repr}. 

Then the norm of the difference of the spectral
projections $P=\EE_{A}(\R_+)$ and $Q=\EE_B(\R_+)$ satisfies the estimate
\begin{equation*}
\|P-Q\| \leq
\sin\left(\frac{1}{2}\arctan v
\right)<\frac{\sqrt{2}}{2},
\end{equation*}
where
\begin{equation*}
v=\inf_{\mu\in(-m_-,m_+)}v_\mu=\inf_{\mu\in(-m_-,m_+)}\sup_{0\ne x\in \Dom[\fa]}\frac{|\fv[x]|}{\fa_{\mu}[x]},
\end{equation*}
with 
\begin{equation*}
\fa_\mu[x,y]=\fa[x,Jy]-\mu \langle x, Jy\rangle, \quad x,y\in \Dom[\fa_\mu] = \Dom[\fa].
\end{equation*}
\end{theorem}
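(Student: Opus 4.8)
The plan is to reduce the self-adjoint spectral problem to a sectorial one via the correspondence established in the proof of Theorem~\ref{repr}, and then import the known $\tan 2\Theta$-type bound for the sectorial situation. Recall that for each $\mu\in(-m_-,m_+)$ we have $B-\mu I = B_\mu = JT_\mu$, where $T_\mu$ is the $m$-sectorial operator associated with the form $\ft_\mu=\fa_\mu+\ii\fv'$, a sectorial form with vertex $0$ and semi-angle $\theta_\mu=\arctan v_\mu$. The first step is to observe that, since $\fv'=0$ forces $T_\mu$ to be the positive self-adjoint operator $A_\mu$ associated with $\fa_\mu$, and since $A-\mu I=JA_\mu$, the spectral projection $P=\EE_A(\R_+)=\EE_{A-\mu I}(\R_+)$ (because $\mu<m_+$ and $-\mu<m_-$ leave the sign structure of $A$ on $\fH_\pm$ unchanged... more precisely one must check $\EE_A(\R_+)=\EE_{A_\mu}$-related projection is $\mu$-independent for $\mu$ in the stated interval); likewise $Q=\EE_B(\R_+)=\EE_{B_\mu}(\R_+)$.

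The heart of the argument is a graph-subspace/rotation estimate in the sectorial setting. The second step is to identify $\Ran P$ and $\Ran Q$ with spectral subspaces of $T_\mu$: writing $T_\mu = A_\mu^{1/2}(I+\ii A_\mu^{-1/2}\bV_\mu A_\mu^{-1/2})A_\mu^{1/2}$ where $\bV_\mu$ is the bounded self-adjoint operator in $\fH_{\fa_\mu}$ representing $\fv'$ (with $\|A_\mu^{-1/2}\bV_\mu A_\mu^{-1/2}\|\le v_\mu$), one transfers the problem to the Hilbert space $\fH_{\fa_\mu}$. There $J$ acts as an involution, $A_\mu$ becomes (a multiple of) the identity after the similarity, and the perturbation $\ii\bV_\mu$ is off-diagonal and bounded with norm $\le v_\mu$. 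This is exactly the bounded Tan $2\Theta$ setup: the angle $\Theta$ between $\Ran\EE_A(\R_+)$ and $\Ran\EE_B(\R_+)$ satisfies $\|\tan 2\Theta\|\le 2v_\mu/(1-\cdots)$, or more cleanly, the relevant sharp form $\|P-Q\|\le\sin(\tfrac12\arctan v_\mu)$. I would invoke here the sectorial version of the Davis--Kahan bound (as in \cite{MS:1} or \cite{Kostrykin:Makarov:Motovilov:2}), applied after the transformation, being careful that the similarity $A_\mu^{1/2}$ is bounded with bounded inverse on $\fH_{\fa_\mu}$ so that it does not move the subspaces — in fact one works directly with the orthogonal projections in the original $\fH$ and uses that the graph of the subspace $\Ran Q$ over $\Ran P$ has angle controlled by $v_\mu$.

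The third step is the optimization: since the estimate $\|P-Q\|\le\sin(\tfrac12\arctan v_\mu)$ holds for every $\mu\in(-m_-,m_+)$ with the left-hand side independent of $\mu$, and since $x\mapsto\sin(\tfrac12\arctan x)$ is monotone increasing on $[0,\infty)$, we may pass to the infimum and obtain $\|P-Q\|\le\sin(\tfrac12\arctan v)$ with $v=\inf_\mu v_\mu$. The strict inequality $\sin(\tfrac12\arctan v)<\tfrac{\sqrt2}{2}$ is immediate because $\arctan v<\pi/2$ for any finite $v$, hence $\tfrac12\arctan v<\pi/4$ and $\sin(\tfrac12\arctan v)<\sin(\pi/4)=\tfrac{\sqrt2}{2}$; one needs $v<\infty$, which holds since $\fv$ is $\fa$-bounded (Definition after Hypothesis~\ref{hh1}) so $v\le v_0<\infty$.

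The main obstacle I expect is the second step: making rigorous that the operator angle between the self-adjoint spectral subspaces $\Ran\EE_A(\R_+)$ and $\Ran\EE_B(\R_+)$ coincides with (or is bounded by) the quantity governed by the sectorial perturbation $\ii\bV_\mu$. One must show that $P$ projects onto an invariant subspace whose image under the similarity to $\fH_{\fa_\mu}$ is $\Ran(I+J)/2$, that $Q$ corresponds to the spectral subspace of $I+\ii\bV_\mu$ for the right half-plane (using that $\spec(T_\mu)$ stays off the imaginary axis, which follows from $\Re\ft_\mu$ being positive definite), and that the relevant $\tan$-bound for this bounded off-diagonal perturbation is exactly $\|\tan2\Theta\|\le 2v_\mu$ in the normalized problem where $d=2$ (the distance between $+1$ and $-1$ scaled). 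Equivalently, one can reprove the bound from scratch by the Riccati/rotation method: the subspace $\Ran Q$ is a graph $\{x+Kx : x\in\Ran P\}$ with $\|K\|=\tan\|\Theta\|$, $K$ solves an operator Riccati equation whose coefficients have norm controlled by $v_\mu$, and solving it yields $\|K\|\le\tan(\tfrac12\arctan v_\mu)$, whence $\|P-Q\|=\|K\|(1+\|K\|^2)^{-1/2}=\sin(\tfrac12\arctan v_\mu)$. Either route requires care with domains and with the fact that $A$, $B$ are unbounded, but the sectorial reduction already done in Theorem~\ref{repr} is precisely what makes it go through.
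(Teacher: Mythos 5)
Your overall skeleton coincides with the paper's: pass to $B_\mu=B-\mu I=JT_\mu$ with $T_\mu$ the $m$-sectorial operator of the form $\ft_\mu=\fa_\mu+\ii\fv'$ (semi-angle $\arctan v_\mu$), prove a bound $\|P-Q\|\le\sin\bigl(\tfrac12\arctan v_\mu\bigr)$ for each $\mu$, use that $(-m_-,m_+)$ lies in the resolvent sets so that the relevant projections do not depend on $\mu$, and then optimize; your third step and the strict inequality $<\tfrac{\sqrt2}{2}$ are fine. However, the central step --- getting from sectoriality of $T_\mu$ to the per-$\mu$ estimate --- is exactly where your argument has a genuine gap. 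First, the reduction you propose via the similarity $A_\mu^{1/2}$ (equivalently, transplanting the problem to $\fH_{\fa_\mu}$) does not preserve the quantity being estimated: $A_\mu^{1/2}$ is not unitary on $\fH$, so it moves orthogonal projections and operator angles, and $\|P-Q\|$ is not invariant under such a non-unitary similarity. You acknowledge this ("does not move the subspaces") but give no argument, and in fact the subspaces and their angles \emph{are} moved in general, so the "bounded Tan $2\Theta$ setup" of \cite{MS:1}, \cite{Kostrykin:Makarov:Motovilov:2} cannot simply be invoked after the transformation. Second, your identification of $Q$ with "the spectral subspace of $I+\ii\bV_\mu$ for the right half-plane" is vacuous: since $\Re(I+\ii\cV_\mu)=I>0$, the whole spectrum lies in the right half-plane and that spectral subspace is all of $\fH$; the object that actually encodes $Q$ is $\sign(B_\mu)$, and the missing link is a relation between $\sign(B_\mu)$ and the structure of the sectorial operator $T_\mu$. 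Your alternative Riccati route is only a plan; carrying it out for unbounded, form-defined $A$, $B$ is essentially the content that needs proof, not a step one can cite.

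The paper closes precisely this gap by a different, and much shorter, device: the polar decomposition $T_\mu=U_\mu|T_\mu|$. Since $B_\mu=JT_\mu$ is self-adjoint and boundedly invertible, one has $|T_\mu|=|B_\mu|$ and $U_\mu=J\sign(B_\mu)$, a unitary operator. Woronowicz's result (Proposition \ref{lem:4:2}) says the unitary polar factor of an $m$-sectorial operator with semi-angle $\theta_\mu=\arctan v_\mu$ is itself sectorial with the same semi-angle, so the spectral theorem for unitaries gives $\|I-U_\mu\|\le 2\sin\bigl(\tfrac12\arctan v_\mu\bigr)$. Since $J=2P-I$ and $\sign(B)=2Q-I$, and $\sign(B_\mu)=\sign(B)$ for all $\mu\in(-m_-,m_+)$, this yields
\begin{equation*}
\|P-Q\|=\tfrac12\|J-\sign(B)\|=\tfrac12\|I-U_\mu\|\le\sin\left(\tfrac12\arctan v_\mu\right),
\end{equation*}
entirely within the original Hilbert space, with no similarity transformation and no Riccati equation. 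If you want to salvage your route, you would have to either prove the angle bound directly for the non-unitary transformed problem (tracking how $A_\mu^{1/2}$ distorts angles, which destroys the sharp constant) or reprove the graph/Riccati estimate in the form setting; the polar-decomposition argument is the ingredient your proposal is missing.
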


The proof of Theorem \ref{thm:esti} uses the following result borrowed from \cite{Woronowicz}.

\begin{proposition}\label{lem:4:2}
Let $T$ be an m-sectorial operator of semi-angle $\theta < \pi/2$. Let $T=U|T|$ be its polar decomposition. If $U$ is unitary, then the unitary
operator $U$ is sectorial with semi-angle $\theta$.
\end{proposition}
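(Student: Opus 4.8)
\textbf{Proof strategy for Proposition \ref{lem:4:2}.}

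The plan is to exploit the functional calculus for the $m$-sectorial operator $T$ together with the general spectral mapping properties of the polar decomposition. First I would recall that for an $m$-sectorial operator $T$ of semi-angle $\theta < \pi/2$ with vertex at the origin, the numerical range $W(T)$ is contained in the closed sector $\overline{S_\theta} = \{ z \in \C : |\arg z| \le \theta \}$, and hence $\spec(T) \subset \overline{S_\theta}$ as well. Writing $T = U|T|$ with $U$ unitary (the hypothesis), the operator $|T| = (T^\ast T)^{1/2}$ is self-adjoint, positive, and boundedly invertible since $T$ is; consequently $U = T|T|^{-1}$ is a genuine unitary operator and $\spec(U) \subset \T = \{ z : |z| = 1\}$.

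The key step is to show that $\spec(U) \subset \{ \e^{\ii\varphi} : |\varphi| \le \theta\}$. For this I would argue as follows. Consider, for $z$ outside the sector, the resolvent identity relating $(U - \e^{\ii\varphi})^{-1}$ to $(T - r\e^{\ii\varphi})^{-1}$ via the substitution $T = U|T|$; more precisely, one uses that $\e^{\ii\varphi} \notin \spec(U)$ as soon as the ray $\{ r \e^{\ii\varphi} : r > 0\}$ lies in the resolvent set of $T$ and the resolvent $(T - r\e^{\ii\varphi})^{-1}$ is uniformly bounded in $r$ — a property that holds precisely when $|\varphi| > \theta$, by the $m$-sectoriality estimate $\|(T - \zeta)^{-1}\| \le \dist(\zeta, \overline{S_\theta})^{-1}$. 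The uniform bound, combined with $|T|^{-1}$ being bounded and $\|{|T|}\| $ possibly unbounded handled by spectral decomposition of $|T|$, forces $\e^{\ii\varphi} \in \rho(U)$. Concretely, on each spectral subspace of $|T|$ corresponding to a bounded interval $[a,b]$ with $0 < a \le b$, the operator $U$ restricted there is similar (via $|T|^{1/2}$) to $|T|^{-1/2} T |T|^{-1/2}$ restricted there, whose numerical range stays inside $\overline{S_\theta}$ and whose spectrum therefore avoids $\e^{\ii\varphi}$ for $|\varphi| > \theta$; letting $[a,b]$ exhaust $(0,\infty)$ and using that $0 \notin \spec(|T|)$ gives $\spec(U) \subset \{\e^{\ii\varphi} : |\varphi| \le \theta\}$.

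Once the spectral inclusion is established, the conclusion that $U$ is sectorial with semi-angle $\theta$ is essentially immediate: a unitary operator whose spectrum lies in the closed arc $\{\e^{\ii\varphi} : |\varphi| \le \theta\}$ has numerical range equal to the closed convex hull of that arc (by the spectral theorem for unitary — equivalently normal — operators the numerical range is the closed convex hull of the spectrum), and this convex hull is contained in the closed sector $\overline{S_\theta}$ with vertex $0$; moreover the semi-angle cannot be smaller than $\theta$ since $\spec(U)$, being obtained from $\spec(T)$ by the angle-preserving map $r\e^{\ii\varphi} \mapsto \e^{\ii\varphi}$ on the relevant part of the spectrum, reaches the boundary rays whenever $\spec(T)$ does. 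The main obstacle I anticipate is the unboundedness of $|T|$: the naive similarity transform $T \mapsto |T|^{-1/2} T |T|^{-1/2}$ is only formal, so the argument must be organized through the bounded spectral pieces of $|T|$ (or, alternatively, through a direct resolvent estimate for $U$), and one must be careful that the bound on $\|(T-\zeta)^{-1}\|$ is genuinely uniform along rays — this is exactly where $m$-sectoriality, as opposed to mere sectoriality of the form, is used.
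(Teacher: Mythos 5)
There is a genuine gap. First, note what the paper actually does: Proposition \ref{lem:4:2} is not proven there at all --- it is quoted from Woronowicz \cite{Woronowicz}, and the accompanying remark only treats the case of a \emph{bounded} sectorial $T$ with bounded inverse, via the similarity $|T|^{-1/2}U|T|^{1/2}$, whose numerical range coincides with that of $T$ and whose resolvent set coincides with that of $U$. The sound core of your proposal (the similarity transform, the inclusion $\spec(U)\subset\T\cap\overline{S_\theta}$, and the final step that a unitary, hence normal, operator with spectrum in the arc has numerical range equal to the closed convex hull of that arc) reproduces exactly this bounded, boundedly invertible case. The actual content of the proposition is the unbounded case, and both of your routes to the spectral inclusion $\spec(U)\subset\{\e^{\ii\varphi}:|\varphi|\le\theta\}$ break down there.

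Concretely: (i) the asserted criterion ``$\e^{\ii\varphi}\in\rho(U)$ as soon as the ray $\{r\e^{\ii\varphi}:r>0\}$ lies in $\rho(T)$ with $(T-r\e^{\ii\varphi})^{-1}$ uniformly bounded in $r$'' is not a known fact and is false as stated: for the shear $T=\left(\begin{smallmatrix}1&2\\0&1\end{smallmatrix}\right)$ one has $\spec(T)=\{1\}$ and a uniformly bounded resolvent along every ray of nonzero angle, yet the unitary polar factor is the rotation by $\pi/4$, so $\e^{\pm\ii\pi/4}\in\spec(U)$. What rules this out in the sectorial situation is precisely the numerical-range information, i.e.\ the content of Woronowicz's theorem, so invoking such a criterion here is circular. (ii) The truncation argument fails because the spectral subspaces $\Ran\EE_{|T|}([a,b])$ are \emph{not} invariant under $U$ (or $T$): $U$ and $|T|$ commute only when $T$ is normal. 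What you actually obtain on such a subspace is the compression $PUP$, which is indeed similar to the compression of $|T|^{-1/2}U|T|^{1/2}$ and hence has spectrum in the sector, but the spectrum of a compression gives no control over $\spec(U)$; and letting $[a,b]$ exhaust $(0,\infty)$ yields only strong convergence, under which spectral inclusions do not pass to the limit. For the same reason the ``angle-preserving map'' from $\spec(T)$ to $\spec(U)$ invoked for sharpness does not exist in general. (A smaller point: you assume $|T|$ is boundedly invertible, but the hypothesis is only that $U$ is unitary, which gives $\Ker T=\Ker T^{*}=\{0\}$ and nothing more.) As written, therefore, your argument establishes only the case already covered by the paper's remark, while the general statement would still have to be taken from \cite{Woronowicz} or given a genuinely different proof.
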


\begin{remark}
We note that for a bounded sectorial operator  $T$ with a bounded inverse the statement is quite simple. Due to the equality
\begin{equation*}
\langle x, T x\rangle = \langle |T|^{-1/2}y, U|T|^{1/2}y\rangle = \langle
y, |T|^{-1/2} U|T|^{1/2}y\rangle,\qquad y=|T|^{1/2}x,
\end{equation*}
the operators $T$ and $|T|^{-1/2} U|T|^{1/2}$ are sectorial with the
semi-angle $\theta$. The resolvent sets of the operators $|T|^{-1/2}
U|T|^{1/2}$ and $U$ coincide. Therefore, since $U$ is unitary, it follows that $U$ is
sectorial with semi-angle $\theta$.
\end{remark}

\begin{proof}[Proof of Theorem \ref{thm:esti}]
Given $\mu\in(-m_-,m_+)$, let $T_{\mu}=U_{\mu}|T_{\mu}|$ be the polar decomposition of the sectorial
operator $T_{\mu}$ with vertex $0$ and semi-angle $\theta_\mu$, with
\begin{equation}\label{ugl}
\theta_\mu=\arctan (v_\mu)
\end{equation}
(as in the proof of Theorem \ref{repr}  (cf.~\eqref{tmu}). Since $B_{\mu}=JT_{\mu}$, one concludes that
\begin{equation*}
|T_{\mu}|=|B_{\mu}|
\quad \text{and}\quad
U_{\mu}=J^{-1}\sign(B_{\mu}).
\end{equation*}
Since $T_{\mu}$ is a sectorial operator with sem-angle $\theta_\mu$, by a result in \cite{Woronowicz} (see Proposition \ref{lem:4:2}),
the unitary operator $U_{\mu}$ is sectorial with vertex $0$ and semi-angle $\theta_\mu$ as well. Therefore,  applying  the spectral theorem  for the unitary operator $U_\mu$  from \eqref{ugl} one obtains the estimate
\begin{equation*}
\|J-\sign( B_{\mu})\|=\|I-J^{-1}\sign(B_{\mu})\|=\|I-U_{\mu}\|\le 2 \sin \left(\frac{1}{2}\arctan v_\mu\right).
\end{equation*}

Since the open interval $(-m_-,m_+)$  belongs to the resolvent
set of the operator $B=B_0$, the involution  $\sign( B_\mu)$
does not depend on $\mu\in (-m_-,m_+)$ and hence one concludes  that
\begin{equation*}
\sign( B_{\mu})=\sign( B_0)=\sign(B),\quad \mu\in (-m_-,m_+).
\end{equation*}
Therefore,
\begin{equation}\label{muo}
\|P-Q\|=\frac12 \|J-\sign( B)\|=\frac12 \|J-\sign( B_\mu)\|
\le \sin \left (\frac{1}{2}\arctan v_\mu\right)
\end{equation} and, hence,  since $\mu\in(-m_-,m_+)$ has been chosen  arbitrarily, from \eqref{muo} it follows that 
\begin{equation*}
\|P-Q\|\le \inf_{\mu\in(-m_-,m_+)} \sin \left (\frac{1}{2}\arctan v_\mu\right )\le
 \sin \left (\frac{1}{2}\arctan v \right ).
\end{equation*}

The proof is complete.
\end{proof}

As a consequence, we have the following result that can be considered a geometric variant of the  Birman-Schwinger principle for  the off-diagonal form-perturbations.

\begin{corollary}\label{cor:pi8}
Assume Hypothesis \ref{hh1} and suppose that $\fv$ is off-diagonal.
Then the form $\fa_J+\fv$ is positive definite if and only if the $a_J$-relative bound \eqref{xyz} of $\fv$ does not exceed  one. In this case 
\begin{equation*}
\|P-Q\|
\le \sin \left  (\frac{\pi}{8} \right),
\end{equation*}
where $P$ and $Q$ are the spectral projections referred to in Theorem
\ref{thm:esti}.
\end{corollary}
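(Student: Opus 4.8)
The plan is to reduce the corollary to Theorem~\ref{thm:esti} by observing that the endpoint $\mu = 0$ is precisely the case that controls positivity of $\fa_J + \fv$. First I would establish the equivalence: the form $\fa_J + \fv'$ — no, more precisely $\fa_J + \fv$ restricted appropriately — is positive definite if and only if $v_0 \le 1$. For this, recall from Remark~\ref{rem:4} that for $x = x_+ + x_-$ with $x_\pm \in \fH_\pm \cap \Dom[\fa]$ one has $\fa_J[x] = \fa_J[x_+] + \fa_J[x_-]$ and $|\fv[x]| \le 2 v_0 \sqrt{\fa_J[x_+]\,\fa_J[x_-]} \le v_0\,\fa_J[x]$, with the first inequality sharp in the sense that it is attained in the limit by suitable sequences balancing $\fa_J[x_+]$ against $\fa_J[x_-]$. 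Hence if $v_0 \le 1$ then $\fa_J[x] + \fv[x] \ge (1 - v_0)\fa_J[x] \ge 0$, and if $v_0 > 1$ one can choose $x_\pm$ with $\fa_J[x_+] = \fa_J[x_-]$ making $\fv[x]$ as negative as $-v_0 \fa_J[x]/(\text{something} < v_0)$, forcing $\fa_J[x] + \fv[x] < 0$; one must be slightly careful that the supremum defining $v_0$ is genuinely approached by elements respecting the split, which follows because $\fv$ annihilates the diagonal blocks (Remark~\ref{rem:4}) so only the cross term matters.

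Second, assuming $v_0 \le 1$, I would feed $\mu = 0$ into the quantity $v$ of Theorem~\ref{thm:esti}: since $\fa_0 = \fa_J$ by definition, we have $v_0 = \sup_{x \ne 0} |\fv[x]|/\fa_J[x]$ in the notation of \eqref{xyz}, which coincides with $v_\mu$ at $\mu = 0$. Therefore
\begin{equation*}
v = \inf_{\mu \in (-m_-, m_+)} v_\mu \le v_0 \le 1.
\end{equation*}
Plugging into the conclusion of Theorem~\ref{thm:esti} and using monotonicity of $t \mapsto \sin\!\big(\tfrac12 \arctan t\big)$ on $[0,\infty)$ gives
\begin{equation*}
\|P - Q\| \le \sin\!\left(\tfrac12 \arctan v\right) \le \sin\!\left(\tfrac12 \arctan 1\right) = \sin\!\left(\tfrac{\pi}{8}\right),
\end{equation*}
which is exactly the asserted bound.

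The main subtlety — not really an obstacle, but the point requiring care — is the "only if" direction of the equivalence and the matching of $v_0$ with $v_\mu|_{\mu=0}$: one needs that $\fa_0[x,y] = \fa[x,Jy] - 0 \cdot \langle x, Jy\rangle = \fa_J[x,y]$, so the definitions of $v_0$ in \eqref{xyz} and of $v_0$ as a member of the family $\{v_\mu\}$ literally agree, and that the relative bound is sharp enough that $v_0 \le 1$ is both necessary and sufficient for positivity. I would also note in passing that Theorem~\ref{repr} already guarantees $B$ is well-defined and that $0$ lies in its resolvent set when $v_0 < \infty$, so $P$ and $Q$ make sense; the borderline case $v_0 = 1$ still gives $\|P-Q\| \le \sin(\pi/8) < \sqrt2/2$, consistent with the strict inequality in Theorem~\ref{thm:esti}. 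This completes the argument.
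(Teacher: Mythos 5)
Your overall route is the paper's: first characterize positivity of $\fa_J+\fv$ by the relative bound $v_0$, then feed $v\le v_0\le 1$ into Theorem \ref{thm:esti}; the second half of your argument (including the identification $v_\mu|_{\mu=0}=v_0$ and $\sin\bigl(\tfrac12\arctan 1\bigr)=\sin(\pi/8)$) is essentially identical to the paper's. The one place where your write-up falls short of a proof is the ``only if'' direction: the sentence about choosing $x_\pm$ with $\fa_J[x_+]=\fa_J[x_-]$ and making $\fv[x]$ ``as negative as $-v_0\fa_J[x]/(\text{something}<v_0)$'' does not establish anything as stated. The missing step is the sign flip permitted by off-diagonality: if $v_0>1$, pick $x$ with $|\fv[x]|>\fa_J[x]$ and, if $\fv[x]>0$, replace $x_-$ by $-x_-$; by Remark \ref{rem:4} this turns $\fv[x]=2\Re\fv[x_+,x_-]$ into its negative while leaving $\fa_J[x]=\fa_J[x_+]+\fa_J[x_-]$ unchanged, so $\fa_J[x]+\fv[x]<0$ for the modified element. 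This one line is exactly what the paper encodes differently: it represents $\fv[x,y]=\fa_J[x,\cV y]$ with a bounded self-adjoint operator $\cV$ on the form Hilbert space $\Dom[\fa]$ with $\|\cV\|=v_0$, and observes that off-diagonality makes the numerical range of $\cV$ the symmetric interval $[-\|\cV\|,\|\cV\|]$, so the value $-v_0$ is approached and positivity forces $v_0\le 1$. With that step made explicit your proof is complete and coincides in substance with the paper's; the residual imprecision about ``positive definite'' versus merely nonnegative at the borderline $v_0=1$ is shared with the paper and does not affect the norm estimate.
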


\begin{proof}
Since $\fv$ is an $\fa$-bounded form, one concludes that there exists a self-adjoint
bounded operator $\cV$  in the Hilbert space $\Dom[\fa]$ such that
\begin{equation*}
v[x,y]=\fa_J[x, \cV y],\quad x,y\in \Dom[\fa].
\end{equation*}
Since $\fv$ is off-diagonal, the numerical range of  $\cV$ coincides with the symmetric about the origin interval $[-\|\cV\|, \|\cV\|]$. Therefore, one can find a sequence $\{x_n\}_{n=1}^\infty$ in $\Dom[\fa]$ such that
\begin{equation*}
\lim_{n\to \infty} \frac{\fv[x_n]}{\fa_J[x_n]}=-\|\cV\|,
\end{equation*}
which proves that $\|\cV\|\le 1$ if and only if the form $\fa_J+\fv$ is positive definite. If it is the case, applying Theorem \ref{thm:esti}, one obtains the inequality
\begin{equation*}
\|P-Q\| \leq \sin \left(\frac{1}{2}\arctan \left(\|\cV\|\right )\right )\le \sin \left (\frac{\pi}{8}\right)
\end{equation*}
which completes the proof.
\end{proof}

\begin{remark}
We remark that in accordance with the Birman-Schwinger principle, for the form $\fa_J+\fv$ to have negative spectrum it is necessary that the $a_J$-relative bound $\|\cV\|$ of the perturbation $\fv$ is greater than one. As Corollary  \ref{cor:pi8} shows, in the off-diagonal perturbation theory this condition is also sufficient.
\end{remark}


\section{Two sharp estimates in the semibounded case}\label{sec:semi}

In this section we  will be dealing with the case of off-diagonal form-perturbations of 
a semi-bounded operator.

\begin{hypothesis}\label{ppp}
Assume that $A$ is a self-adjoint semi-bounded from below operator. Suppose that $A$ has a bounded inverse. Assume, in addition, that the following conditions hold:
\begin{itemize}
\item[(i)] \emph{The spectral condition.} An  open finite interval
$(\alpha, \beta)$ belongs to the resolvent  set of the operator  $A$.
We set
\begin{equation*}
\Sigma_-=\spec(A)\cap (-\infty, \alpha]\quad \text{and} \quad
\Sigma_+=\spec(A)\cap [\beta, \infty].
\end{equation*}


\item[(ii)] \emph{Boundedness.} The sesquilinear form $\fv$ is symmetric on $\Dom[\fv]\supset \Dom(|A|^{1/2})$ and
\begin{equation}\label{nach}
v:=
\sup_{0\ne x\in \Dom[\fa \,]}\frac{|\fv[x]|}{\||A|^{1/2}x\|^2}<\infty.
\end{equation}

\item[(iii)] \emph{Off-diagonality.} The sesquilinear form $\fv$ is off-diagonal with respect to
the orthogonal decomposition $\fH=\fH_+\oplus \fH_-$, with
\begin{equation*}
\fH_+=\Ran \EE_A((\beta, \infty))\quad \text{ and } \quad
\fH_-=\Ran \EE_A((-\infty, \alpha )).
\end{equation*}
That is,
\begin{equation*}
\fv[Jx,y]=-\fv[x,Jy], \quad x,y\in \Dom[\fa],
\end{equation*}
where the self-adjoint involution $J$ is given by
\begin{equation}\label{J}
J=\EE_A\left ((\beta, \infty)\right )-\EE_A\left ((-\infty, \alpha)\right ).
\end{equation}
\end{itemize}
\end{hypothesis}

Let $\fa$ be the closed form represented by the operator $A$. A direct application of Theorem \ref{repr} shows that under Hypothesis \ref{ppp} there is a unique self-adjoint boundedly invertible operator $B$ associated with the form 
$$
\fb=\fa+\fv.
$$

Under Hypothesis \ref{ppp} we distinguish two cases (see Fig.~\ref{fig:2} and \ref{fig:3}).

\begin{itemize}
\item[{\bf Case I}.]
 Assume that   $\alpha <0$ and $\beta >0$. Set \begin{equation*}
d_+=\text{dist}(\inf (\Sigma_+), 0) \quad \text{and}\quad 
d_-=\text{dist}(\inf (\Sigma_-), 0)
\end{equation*}
and suppose that $d_+>d_-$. 
\item[{\bf Case II}.]
Assume that  $\alpha,\beta >0$. Set 
 \begin{equation*}
d_+=\dist(\inf (\Sigma_+), 0)
\quad \text{and}\quad 
d_-=
\dist(\sup (\Sigma_-), 0).
\end{equation*}
\end{itemize}

As it follows from the definition of the quantities $d_\pm$,   the sum $d_-+d_+$
coincides with the distance between  the lower edges of the spectral
components $\Sigma_+$ and $\Sigma_-$ in Case I, while in Case II the difference
$d_+-d_-$ is the distance from the lower edge of $\Sigma_+$ to the  upper edge
of the spectral component $\Sigma_-$. Therefore, $d_+-d_-$ coincides
with the length of the spectral gap $(\alpha, \beta)$ of the operator $A$ in latter case.


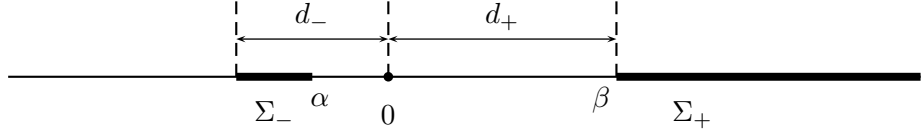
\begin{figure}[htb0]
\begin{pspicture}(12,2)
\psline(0,1)(12,1)
\psline[linewidth=3pt](8,1)(12,1)
\psline[linewidth=3pt](3,1)(4,1)
\psline[linestyle=dashed](3,1)(3,2)
\psline[linestyle=dashed](5,1)(5,2)
\psline[linestyle=dashed](8,1)(8,2)
\psline[linewidth=.5pt]{<->}(3,1.5)(5,1.5)
\psline[linewidth=.5pt]{<->}(5,1.5)(8,1.5)
\rput(3.5,0.5){$\Sigma_-$}
\rput(9,0.5){$\Sigma_+$}
\rput(5,.5){$0$}
\rput(4,1.8){$d_-$}
\rput(6.5,1.8){$d_+$}
\rput(4.1,.7){$\alpha$}
\rput(7.8,.7){$\beta$}
\psdot*[dotscale=1](5,1)
\end{pspicture}
\caption{ \label{fig:2}\small  The spectrum of the
unperturbed sign-indefinite semibounded  invertible operator $A$ in Case I. }
\end{figure}

\begin{figure}[htb0]
\begin{pspicture}(12,3)
\psline(0,1)(12,1)
\psline[linewidth=3pt](8,1)(12,1)
\psline[linewidth=3pt](6,1)(7,1)
\psline[linestyle=dashed](7,1)(7,1.5)
\psline[linestyle=dashed](5,1)(5,2.5)
\psline[linestyle=dashed](8,1)(8,2.5)
\psline[linewidth=.5pt]{<->}(5,1.5)(7,1.5)
\psline[linewidth=.5pt]{<->}(5,2.2)(8,2.2)
\rput(6.5,0.5){$\Sigma_-$}
\rput(9,0.5){$\Sigma_+$}
\rput(5,.5){$0$}
\rput(6,1.8){$d_-$}
\rput(6.5,2.5){$d_+$}
\rput(7.1,.7){$\alpha$}
\rput(7.9,.7){$\beta$}
\psdot*[dotscale=1](5,1)
\end{pspicture}\caption{ \label{fig:3}\small  The spectrum of the
unperturbed strictly positive  operator $A$ with a gap in its spectrum in Case II.}
\end{figure}
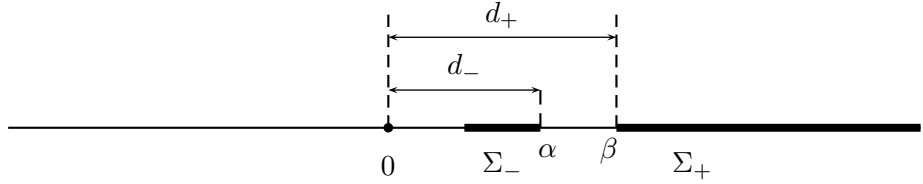

We remark that  the condition $d_+>d_-$ required in Case I,   holds only if the length of the
convex hull of negative spectrum $\Sigma_-$ of $A$
does not exceed the one of the spectral gap $ (\alpha, \beta)=\left (\sup(\Sigma_-), \inf (\Sigma_+)\right )$.

Now we are prepared to state a relative version of the Tan $2 \Theta$  Theorem in the case where the unperturbed operator is semi-bounded or even  positive.

\begin{theorem}\label{thm:esti:prime} In either  Cases I or II,
introduce the spectral projections 
\begin{equation}\label{spp}
P=\EE_A((-\infty,\alpha])\quad \text{and }\quad Q=\EE_B((-\infty, \alpha])\end{equation} 
of the operators $A$ and $B$ respectively.

Then the norm of the difference of $P$ and $Q$  satisfies the estimate
\begin{equation}\label{Davis}
\|P-Q\|\le \sin \left(\frac{1}{2}\arctan \left [2
\frac{v}{\delta}\right]\right)<\frac{\sqrt{2}}{2},
\end{equation}
where
\begin{equation}\label{delta}
\delta=\frac{1}{\sqrt{d_+d_-}}\begin{cases}
d_++d_-& \text{ in Case I},\\
d_+-d_-& \text{ in Case II},
\end{cases}
\end{equation}
and   $v$  stands for  the relative bound of the off-diagonal form $\fv$ (with respect to $\fa$)  given by \eqref{nach}. 
\end{theorem}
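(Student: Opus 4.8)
The plan is to reduce Theorem \ref{thm:esti:prime} to the abstract estimate of Theorem \ref{thm:esti} by choosing an optimal spectral shift $\mu$ and re-scaling the positive cone. The point is that in Theorem \ref{thm:esti} the bound involves $v=\inf_\mu v_\mu$ with $v_\mu$ measured against the \emph{shifted} form $\fa_\mu[x,y]=\fa[x,Jy]-\mu\langle x,Jy\rangle$, whereas here $v$ is measured against $\||A|^{1/2}x\|^2$; so the whole content is to estimate $\inf_\mu v_\mu$ in terms of $v/\delta$. First I would fix the midpoint shift: in Case~I take $\mu$ to be the point equidistant (in a multiplicative sense) from $-d_-$ and $d_+$, and in Case~II take $\mu$ to sit in the gap $(\alpha,\beta)$ — concretely the value that makes the ratio below as small as possible, which turns out to be the geometric-type midpoint producing exactly the constant $\delta$ in \eqref{delta}. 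With $J=\EE_A((\beta,\infty))-\EE_A((-\infty,\alpha))$ as in \eqref{J}, one has $\fa[x,Jy]=\langle |A|^{1/2}x, \sign(A)\,J\,|A|^{1/2}y\rangle$ acting as $|A|$ on each half-space, so $\fa_\mu$ is positive definite for $\mu\in(\alpha,\beta)$ (Case~II) or $\mu\in(-d_-,d_+)$ (Case~I) — this is where the spectral condition Hypothesis \ref{ppp}(i) and the hypothesis $d_+>d_-$ in Case~I are used.

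Next I would carry out the comparison of quadratic forms. Writing $x=x_++x_-$ with $x_\pm\in\fH_\pm$, one has
\begin{equation*}
\fa_\mu[x] = \langle (|A|-\mu)x_+,x_+\rangle + \langle (|A|+\mu)x_-,x_-\rangle \ge (d_+-\mu)\||A|^{1/2}x_+\|^2/d_+ \cdot d_+ \ \ \text{(schematically)},
\end{equation*}
so more carefully, on $\fH_+$ the operator $|A|-\mu$ is bounded below by $(1-\mu/d_+)|A|$ only after controlling the ratio $(\lambda-\mu)/\lambda$ for $\lambda\in\Sigma_+$; since that ratio is increasing in $\lambda$ its infimum over $\Sigma_+$ is attained at $\lambda=d_+$, giving $\fa_\mu[x_+]\ge \frac{d_+-\mu}{d_+}\||A|^{1/2}x_+\|^2$, and symmetrically on $\fH_-$ one gets $\fa_\mu[x_-]\ge\frac{d_-+\mu}{d_-}\||A|^{1/2}x_-\|^2$ in Case~I (with the appropriate sign change for $\mu$ in Case~II). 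Using the off-diagonal bound \eqref{vad}-type estimate $|\fv[x]|\le 2v\,\||A|^{1/2}x_+\|\,\||A|^{1/2}x_-\|$ together with these two one-sided bounds and the elementary inequality $2ab\le c\,a^2+c^{-1}b^2$ chosen with the weight $c=\sqrt{(d_-+\mu)d_+/((d_+-\mu)d_-)}$, I would obtain
\begin{equation*}
|\fv[x]| \le \frac{2v}{\sqrt{(d_+-\mu)(d_-+\mu)/(d_+d_-)}}\ \fa_\mu[x],
\end{equation*}
i.e. $v_\mu\le 2v/\big(\sqrt{(d_+-\mu)(d_-+\mu)}/\sqrt{d_+d_-}\big)$ in Case~I. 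Then maximizing $(d_+-\mu)(d_-+\mu)$ over $\mu$ gives the maximum at $\mu=(d_+-d_-)/2$ with value $((d_++d_-)/2)^2$, hence $v_\mu\le 2v/\delta$ with $\delta=(d_++d_-)/\sqrt{d_+d_-}$ exactly as in \eqref{delta}; the analogous one-variable optimization in Case~II (where $\mu$ ranges in the gap and the relevant product is built from $d_+$ and $-d_-$, i.e. from the distances to the endpoints of the gap) produces $\delta=(d_+-d_-)/\sqrt{d_+d_-}$.

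Finally I would invoke Theorem \ref{thm:esti}: it applies verbatim since $\fv$ is off-diagonal with respect to $\fH=\fH_+\oplus\fH_-$ and $\fa$-bounded, and it gives $\|P-Q\|\le\sin\!\big(\tfrac12\arctan v_\star\big)$ with $v_\star=\inf_\mu v_\mu$; by the previous paragraph $v_\star\le 2v/\delta$, and since $\arctan$ and $\sin$ are increasing on the relevant range this yields \eqref{Davis}, with the strict bound $\sqrt2/2$ coming from $\arctan$ being valued in $[0,\pi/2)$ so that $\tfrac12\arctan(\cdot)<\pi/4$. One should also check that the projection $P=\EE_A((-\infty,\alpha])$ appearing here coincides with $\EE_A(\R_-)$ relative to the shifted operator $A-\mu$ for the chosen $\mu$ — immediate since $\mu\in(\alpha,\beta)$ and $(\alpha,\beta)$ is a spectral gap — and likewise $Q=\EE_B((-\infty,\alpha])=\EE_{B-\mu}(\R_-)$ because $(\alpha,\beta)\subset(-m_-,m_+)$ lies in the resolvent set of $B$ by Theorem \ref{repr}(ii), which is exactly the fact that makes $\sign(B_\mu)$ independent of $\mu$ in the proof of Theorem \ref{thm:esti}. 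The main obstacle, and the only real computation, is the two-sided form comparison leading to the clean one-parameter optimization — in particular verifying that the worst constant on each half-space is attained at the spectral edge $d_\pm$ (monotonicity of $(\lambda-\mu)/\lambda$) rather than somewhere in the interior of $\Sigma_\pm$, and handling the sign of $\mu$ correctly so that Cases I and II fall out of the same algebra with only the feasible range of $\mu$ changing.
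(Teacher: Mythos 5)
Your proposal follows essentially the same route as the paper's proof: apply Theorem \ref{thm:esti} to the shifted forms $\fa_\mu$, bound $v_\mu$ via the off-diagonal estimate of Remark \ref{rem:4} combined with the spectral-edge comparisons $\fa_0[x_+]\le\frac{d_+}{d_+-\mu}\,\fa_\mu[x_+]$ and $\fa_0[x_-]\le\frac{d_-}{d_-+\mu}\,\fa_\mu[x_-]$ (resp. $\frac{d_-}{\mu-d_-}$ in Case II), optimize at $\mu=\frac{d_+-d_-}{2}$ in Case I and $\mu=\frac{d_++d_-}{2}$ in Case II, and identify $P,Q$ with the shifted spectral projections exactly as the paper does. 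The only blemish is an internal factor-of-two slip: your displayed inequality $|\fv[x]|\le \frac{2v}{\sqrt{(d_+-\mu)(d_-+\mu)/(d_+d_-)}}\,\fa_\mu[x]$ would, taken literally, yield $v_\mu\le 4v/\delta$ at the optimal $\mu$; the weighted inequality $2ab\le c\,a^2+c^{-1}b^2$ already absorbs the factor $2$ in $|\fv[x]|\le 2v\,ab$, so the correct coefficient there is $v$ rather than $2v$, and then your stated conclusion $v_\mu\le 2v/\delta$ — which is what the theorem needs and what the paper obtains — follows.
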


\begin{proof} 
We start with the remark that 
the form $\fa-\mu$, where $\fa$ is the form of $A$,  satisfies Hypothesis \ref{hh1} with $J$ given by \eqref{J}.
Set 
$$
\fa_\mu=(\fa-\mu)_J, \quad\mu \in (\alpha, \beta),
$$
that is, 
$$
\fa_\mu[x,y]=\fa[x,Jy]-\mu[x,Jy], \quad x,y\in \Dom[\fa].
$$
Notice that $\fa_\mu$ is a strictly positive closed form represented by the operators 
$JA-J\mu=|A|-\mu J$  and $ JA-\mu J=|A-\mu I|$ in Cases I and  II, respectively.

Since $\fv$ is off-diagonal, from Theorem \ref{thm:esti} it follows that 
\begin{equation}\label{first}
\|\EE_{A-\mu I}(\R_+)-\EE_{B-\mu I}(\R_+)\| \leq
\sin\left(\frac{1}{2}\arctan
v_\mu\right)\quad\text{ for all }\quad  \mu\in( \alpha, \beta),
\end{equation}
with
\begin{equation}
v_\mu=:
\sup_{0\ne x\in \Dom[\fa \,]}\frac{|\fv[x]|}{\fa_\mu[x]}.
\end{equation}

Since $\fv$ is off-diagonal, by Remark \ref{rem:4} one gets the estimate
$$
|\fv[x]|\le 2 v_0 \sqrt{\fa_0[x_+]\fa_0[x_-]}, \quad x\in \Dom [\fa],
$$
where $x=x_++x_-$ is a unique decomposition of the element $x\in \Dom [\fa]$ with 
$$x_\pm\in \fH_\pm\cap \Dom [\fa].$$
Thus, in these notations,  taking into account that  
$$
v_0=v,
$$
where $v$ is given by \eqref{nach}, 
one gets the bound
\begin{equation}\label{osnb}
v_\mu\le 2 v\sup_{0\ne x\in \Dom[\fa \,]}\frac{      \sqrt{\fa_0[x_+]\fa_0[x_-]}  }{\fa_\mu[x]}.
\end{equation}

Since $\fa_\mu$ is represented by
$JA-J\mu=|A|-\mu J$  and $ JA-\mu J=|A-\mu I|$ in Cases I and  II, respectively, 
one observes that 
\begin{equation}\label{nado}
\fa_\mu[x]=
\begin{cases}
\fa_0[x_+]-\mu\|x_+\|^2+\fa_0[x_-]+\mu\|x_-\|^2, & \text{ in  Case I,}\\
\fa_0[x_+]-\mu\|x_+\|^2-\fa_0[x_-]+\mu\|x_-\|^2, & \text{ in  Case II.}
\end{cases}
\end{equation}

Introducing the elements  $y_\pm\in \fH_\pm$,
$$y_\pm:=
\begin{cases}(|A|\mp \mu I)^{1/2} x_\pm, & \text{ in Case I},\\
\pm(A-\mu I)^{1/2} x_\pm, & \text{ in Case II},
\end{cases}
$$
and taking into account \eqref{nado},
one obtains the representation
$$
\frac{      \sqrt{\fa_0[x_+]\fa_0[x_-]}  }{\fa_\mu[x]}=
 \frac{\||A|^{1/2}(|A|- \mu I)^{-1/2}y_+\|\, \||A|^{1/2}(-A+ \mu I)^{-1/2}y_-\|}
{\|y_+\|^2+\|y_-\|^2},
$$
valid in both Cases I and II. 
Using the elementary inequality
$$
\|y_+\|\, \|y_-\|\le \frac12\left  (\|y_+\|^2+\|y_-\|^2\right ), 
$$
one arrives at the following bound
\begin{equation}\label{eins}
\frac{      \sqrt{\fa_0[x_+]\fa_0[x_-]}  }{\fa_\mu[x]}\le\frac12
 \||A|^{1/2}(|A|- \mu I)^{-1/2}|_{\fH_+}\|\cdot\||A|^{1/2}(-A+ \mu I)^{-1/2}|_{\fH_-}\|.
\end{equation}

It is easy to see that
\begin{equation}\label{zwei}
\||A|^{1/2} (|A|-\mu I)^{-1/2}|_{\fH_+}\|  \leq
\frac{\sqrt{d_+}}{\sqrt{d_+ -\mu}} \quad \mu\in (\alpha, \beta),\quad \text{ in Cases I and II},  
\end{equation}
while
\begin{equation}\label{drei}
\||A|^{1/2} (-A+\mu I)^{-1/2}|_{\fH_-}\| \leq
\begin{cases}
\frac{\sqrt{d_-}}{\sqrt{d_-+\mu}},& \mu\in (0, \beta),\quad \text{ in Case I},\\
\frac{\sqrt{d_-}}{\sqrt{\mu
-d_-}},& \mu\in (\alpha, \beta),\quad \text{ in Case  II}.
\end{cases}
\end{equation} 
Choosing $\mu=\frac{d_+-d_-}{2}>0$ in Case I  (recall that $d_+>d_-$ by the hypothesis) 
 and $\mu=\frac{d_++d_-}{2}$ in Case II, and combining \eqref{eins}, \eqref{zwei}, \eqref{drei},  one gets the estimates
$$
\frac{      \sqrt{\fa_0[x_+]\fa_0[x_-]}  }{\fa_{\frac{d_+-d_-}{2}}[x]}\le 
\frac{\sqrt{d_+d_-}}{d_++d_-}\quad\text{in Case I}
$$
and
$$
\frac{      \sqrt{\fa_0[x_+]\fa_0[x_-]}  }{\fa_{\frac{d_++d_-}{2}}[x]}\le 
\frac{\sqrt{d_++d_-}}{d_+-d_-}\quad \text{in Case II}
.
$$
Hence, from \eqref{osnb} it follows that 

\begin{equation*}
v_{\frac{d_+-d_-}{2}}\le 2v
\frac{\sqrt{d_+d_-}}{d_++d_-}\quad\text{in Case I}
\end{equation*}
and 
\begin{equation*}
v_{\frac{d_++d_-}{2}}\le 2v
\frac{\sqrt{d_+d_-}}{d_+-d_-}\quad \text{in Case II}.
\end{equation*}

Applying \eqref{first}, one gets the norm estimates

\begin{equation}\label{first1}
\|\EE_{A-\frac{d_+-d_-}{2} I}(\R_+)-\EE_{B-\frac{d_+-d_-}{2} I}(\R_+)\|
\leq
\sin\left(\frac{1}{2}\arctan\left[2
\frac{\sqrt{d_+d_-}}{d_++d_-}
v\right]\right)
\end{equation}
in Case I and 
\begin{equation}\label{first2}
\|\EE_{A-\frac{d_++d_-}{2} I}(\R_+)-\EE_{B-\frac{d_++d_-}{2} I}(\R_+)\|
\leq
\sin\left(\frac{1}{2}\arctan\left[2
\frac{\sqrt{d_+d_-}}{d_+-d_-}
v\right]\right)
\end{equation}
in Case II.
In remains to observe that $\|P-Q\|$, where the spectral projections $P$ and $Q$ are given by \eqref{spp},  coincides with the left hand side of \eqref{first1}   and \eqref{first2}   in Case I and   Case II, respectively.

The proof is complete.
\end{proof}

\begin{remark} 
We remark that 
the quantity $\delta$ given by \eqref{delta}  coincides with   the \emph{relative distance}
(with respect to the origin) 
between the lower edges of the spectral components $\Sigma_+$ and $\Sigma_-$ in Case I and 
it has the meaning of the \emph{relative length}  (with respect to the origin) of the spectral gap $(d_-, d_+)$ in Case II.

For the further properties of the relative distance and various relative perturbation bounds we
refer to the paper \cite{Li} and references quoted therein.

We also remark that in Case II, i.e., in the case of a positive operator $A$,
the bound \eqref{Davis} directly improves a result  obtained in \cite{Luka},
\emph{the relative $\sin\Theta$ Theorem}, that in the present notations is of the form
\begin{equation*}
\|P-Q\|\le \frac{v}{\delta}.
\end{equation*}
\end{remark}

We conclude our exposition with considering an example of a $2\times 2$ numerical matrix that shows that the main results obtained above are sharp.

\begin{example}\label{exam}
\emph{Let $\fH$ be the  two-dimensional Hilbert space $\fH=\mathbb{C}^2$,
$\alpha<\beta$ and $w\in \mathbb{C}$. }

\emph{We set }
\begin{equation*}
A=\begin{pmatrix} \beta &0 \\ 0& \alpha
\end{pmatrix}
, \quad
V=\begin{pmatrix}
0& w\\
w^*&0\end{pmatrix}\quad \text{ \emph{and} }\quad J=\begin{pmatrix} 1&0\\0&-1
\end{pmatrix}.
\end{equation*}

\emph{Let  $\fv$ be the symmetric form represented by (the operator) $V$.}

\emph{Clearly, the form $\fv$ satisfy Hypothesis \ref{ppp} with the relative bound $v$ given by
\begin{equation*}
v=\frac{|w|}
{\sqrt{|\alpha\beta|}},
\end{equation*}
provided that  $\alpha, \beta\ne0$.
Since $VJ=-JV$, the form $\fv$ is off-diagonal
with respect to the  orthogonal decomposition $\fH=\fH_+\oplus\fH_-$.}

\end{example}

In order to illustrate our results, 
denote by $B$ the self-adjoint matrix associated with the form $\fa+\fv$, that is,
\begin{equation*}
B=A+V=\begin{pmatrix} \beta & w \\ w^* &\alpha\end{pmatrix}.
\end{equation*}

Denote by $P$ the orthogonal projection associated with the  eigenvalue $\alpha$
of the matrix $A$, and by $Q$ the one associated with the lower eigenvalue of the matrix $B$.

It is well know (and easy to see) that
the classical Davis-Kahan Tan $2\Theta$ theorem \eqref{tan:2:Theta} 
is exact in the case of $2\times2$ numerical matrices. In particular,
the norm of the difference of $P$ and $Q$ can be computed explicitly 
\begin{equation}\label{sharp}
\|P-Q\|=\sin \left(\frac12 \arctan \left[\frac{2|w|}{\beta-\alpha}\right]\right).
\end{equation}

Since, in the case in question, 
\begin{equation}\label{susu2}
v_\mu=\sup_{0\ne x\in \Dom[\fa \,]}\frac{|\fv[x]|}{\fa_\mu[x]}=\frac{|w|}
{\sqrt{(\beta-\mu)(\mu-\alpha)}},\quad \mu \in (\alpha, \beta),
\end{equation}
from \eqref{susu2} it follows that
\begin{equation*}
\inf_{\mu\in(\alpha, \beta)}v_\mu=\frac{2|w|}{\beta-\alpha}
\end{equation*} 
(with the infimum attained at the point $\mu =\frac{\alpha+\beta}{2}$).

Therefore,  the result of the relative $\tan 2 \Theta$
Theorem \ref{thm:esti} is sharp.

It  is easy to see that if  $\alpha < 0<\beta$ (Case I), then
the equality \eqref{sharp} can also be rewritten in the form
\begin{equation}\label{zz}
\|P-Q\|=
\sin \left(\frac12 \arctan \left[2\frac{\sqrt{d_+d_-}}{d_++d_-}v\right]\right),
\end{equation}
where $d_+=\beta$, $d_-=-\alpha$ and $v=\frac{|w|}{\sqrt{|\alpha|\beta}}$.

If $0<\alpha <\beta$ (Case II),
the equality \eqref{sharp} can be rewritten as
\begin{equation}\label{zzz}
\|P-Q\|=
\sin \left(\frac12 \arctan \left[2\frac{\sqrt{d_+d_-}}{d_+-d_-}v\right]\right),
\end{equation}
with  $d_+=\beta$, $d_-=\alpha$, and $v=\frac{|w|}{\sqrt{\alpha\beta}}$.

The representations \eqref{zz} and \eqref{zzz} show that the estimate \eqref{Davis}
 becomes equality in the case of $2\times 2$ numerical matrices
and, therefore, the results of Theorem \ref{thm:esti:prime}  are sharp.


\end{document}